\newtheorem{theorem}{Theorem}[section]
\newproof{proof}{\textbf{Proof}}
\newtheorem{lemma}{Lemma}
\newtheorem{corollary}[theorem]{Corollary}
\journal{Discrete Applied Mathematics}
\begin{document}


\begin{frontmatter}

\title{The expected subtree number index in random polyphenylene and spiro chains}

\author[YY]{Yu~Yang}
\ead{yangyu@pdsu.edu.cn}
\author[xiaojun]{Xiao-Jun Sun}
\ead{pdschris@pdsu.edu.cn}
\author[JC]{Jia-Yi Cao}
\ead{jiayicao0219@163.com}
\author[Hwang]{Hua Wang}
\ead{hwang@georgiasouthern.edu}
\author[XD]{Xiao-Dong Zhang\corref{cor1}}
\ead{xiaodong@sjtu.edu.cn}

\cortext[cor1]{Corresponding author}
\address[YY]{School of Computer Science, Pingdingshan University, Pingdingshan 467000, China}
\address[xiaojun]{School of Electrical and Mechanical Engineering, Pingdingshan University, Pingdingshan 467000, China}
\address[JC]{Navigation College, Dalian Maritime University, Dalian, 116026, China}
\address[Hwang]{Department of Mathematical Sciences, Georgia Southern University, Statesboro, GA 30460, USA}
\address[XD]{School of Mathematical Sciences, MOE-LSC, SHL-MAC, Shanghai Jiao Tong University, Shanghai, 200240, China}




\begin{abstract}

The subtree number index $\emph{STN}(G)$ of a simple graph $G$ is the number of nonempty subtrees of $G$. It is a structural and counting topological index that has received more and more attention in recent years. In this paper we first obtain exact formulas for the expected values of subtree number index of random polyphenylene and spiro chains, which are molecular graphs of a class of unbranched multispiro molecules and polycyclic aromatic hydrocarbons. Moreover, we establish a relation between the expected values of the subtree number indices of a random polyphenylene and its corresponding hexagonal squeeze. We also present the average values for subtree number indices with respect to the set of all polyphenylene and spiro chains with $n$ hexagons.
\end{abstract}

\begin{keyword}
Subtree number index  \sep  Random polyphenylene chain \sep Random spiro chain \sep  Expected value \sep Average value

 MSC[2020]  05C80, 05C05

\end{keyword}

\end{frontmatter}



\section{Introduction}
\label{Section:Terminology}

The subtree number index $\emph{STN}(G)$ of a graph $G$ is a structure-based index, defined as the total number of non-empty subtrees of $G$. It is discovered to have applications in the design of reliable communication network \cite{xiao2017trees}, bioinformatics \cite{knudsen2003optimal}, and characterizing physicochemical and structural properties of molecular graphs \cite{merrifield1989topological,yang2015subtrees,yang2017algorithms}. In recent years there have been related works on enumerating subtrees \cite{yan06,sze05,czabarka2018number,chin2018subtrees,ZHANGeccsub2019}, characterizing extremal graphs and values \cite{szekely2007binary, zhang2015minimal, kirk2008largest,zhang2013number}, analyzing relations with other topological indices such as the Wiener index  \cite{yang2015subtrees,yang2017algorithms,szekely2014extremal,wagner2007correlation}, average order and density of subtrees  \cite{vince2010average,jam1983average,haslegrave2014extremal}.

Polyphenylenes, spiro compounds and their derivatives are important polycyclic aromatic hydrocarbons in organic chemistry and have many applications in industry including organic synthesis, drug synthesis, heat exchangers, etc. For more details one may see \cite{deng2012wiener,dovslic2010chain,chen2009six,li2012hosoya} and the references cited therein.

Regarding topological indices of random polyphenylene and spiro chains, Yang and Zhang \cite{Yang2012371} found the expected value of the Wiener index of a random polyphenylene chain. Huang, Kuang and Deng \cite{Huang2013The} obtained the expected values of the Kirchhoff
index of random polyphenyl and spiro chains. Subsequently, Huang, Kuang and Deng \cite{Huang2016ms} presented explicit formulas for the
expected values of the Hosoya index and the Merrifield-Simmons index of a random polyphenylene chain. More recently, Liu \cite{Wei2018Comparing} presented explicit formulas for the expected values of ABC and GA indices in random spiro chains and compare the expected values of these two indices. Zhang, Li, Li, and Zhang \cite{ZHANG2019four} established explicit analytical expressions for the expected values of the Schultz index, Gutman index, multiplicative degree-Kirchhoff index and additive degree-Kirchhoff index of a random polyphenylene chain.


As far as the subtree number is concerned, there is no mathematical or computational studies on these two random chains. In this paper, we fill in the gap by studying the subtree number index of the random polyphenylene and spiro chains.

The rest of the paper is organized as follows. Section \ref{sec:generalnotationsandrelated} contains the necessary definitions and lemmas. In Section \ref{sec:subtreebcsubtreeofphcandspc}, we provide the expected value of the subtree number index of random polyphenylene and spiro chains, and a relation between the subtree number of these two random chains. Lastly, we briefly discuss the average value of the subtree number index of the polyphenylene and spiro chains in Section \ref{sec:averofspiroandpolyph}.

\section{Preliminaries}
\label{sec:generalnotationsandrelated}
We first introduce the technical notations and lemmas that will be used in the discussion. For more background information one may check \cite{yang2015subtrees,yan06,Wei2018Comparing,ZHANG2019four}.

Let $G =(V(G) ,E(G);f,g)$ be a weighted graph on $n$ vertices and $m$ edges, with vertex-weight function $f:V(G)\rightarrow \Re$ and edge-weight function $g:E(G)\rightarrow \Re$ (where $\Re$ is a commutative ring with a unit element 1). Denote by $\mathcal{ST}(G)$ the set of all nonempty subtrees of $G$. Given vertex subset $V_S\subseteq V(G)$ and edge subset $E_S\subseteq E(G)$, denote by $\mathcal{ST}(G,V_S)$, $\mathcal{ST}(G,E_S)$ the set of subtrees containing $V_S$, $E_S$ respectively.

For a given subtree $T\in\mathcal{ST}(G)$, its {\em weight} is defined as
\[
\omega(T)=\prod_{v\in V(T),\;e\in E(T)}f(v)g(e).
\]
And we define the {\em subtree generating function} of $G$ by
\[
F(G; f, g)=\sum_{T\in \mathcal{ST}(G)}\omega(T).
\]
Similarly, the {\em subtree generating function of $G$ containing $V_S$, $E_S$} are as follows:
\[
F(G;f, g;V_S)=\sum_{T\in \mathcal{ST}(G,V_S)}\omega(T),
\]
\[
F(G;f,g;E_S)=\sum \limits_{T\in \mathcal{ST}(G,E_S)}\omega(T).
\]

Letting $\eta(\cdot)$ be the number of subtrees in set $\mathcal{ST}(\cdot)$, we have
\[
 \eta(G) = F(G;1,1)
\]
and
\[
\eta(G,V_S)= F(G; 1,1;V_S), \quad \eta(G,E_S)= F(G; 1,1; E_S).
\]

Through introducing the subtree weight and using generating function, Yan and Yeh~\cite{yan06} developed algorithms for counting the subtrees under various constrains. We summarize their approach as follows.

Let $T =(V(T),E(T);f,g)$ be a weighted tree on $n\geq 2$ vertices, assume $u$ is a leaf vertex and $p_u =(u, v)$ is a pendant edge of $T$, we define a weighted tree $T' =(V(T'),E(T'); f',g')$ from $T$ with $V(T')=V(T)\backslash u$, $E(T')=E(T)\backslash p_u$,
\begin{align}\label{equ:genertreetrans}
f'(w) & =\begin{cases}
f(v)(1+f(u)g( p_u))&\text{if~} w=v,\\
f(w)&\text{otherwise}.
\end{cases}
\end{align}
for any $w\in V(T')$, and $g'(e)=g(e)$ for any $e\in E(T')$.

\begin{lemma}[\cite{yan06}]
\label{lemma:subtreecondes}
Assume $T$ and $T'$ are weighted trees defined above, and $u$ $(\neq v_i)$ is an arbitrary vertex, then
\[
F(T; f, g; v_i)=F(T'; f', g'; v_i),
\]
\[
F(T; f, g)=F(T'; f', g')+f(u).
\]

\end{lemma}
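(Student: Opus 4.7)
The plan is to prove both equations by partitioning the set of subtrees according to whether they contain the deleted leaf $u$ (or equivalently, in $T'$, whether they contain the neighbor $v$), and then using the identity $f'(v) = f(v) + f(v)f(u)g(p_u)$ to match weights on both sides.

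For the first identity, I would fix $v_i \neq u$ and compute $F(T'; f', g'; v_i)$ directly by splitting subtrees $T_S$ of $T'$ containing $v_i$ into two classes: those with $v \notin V(T_S)$ and those with $v \in V(T_S)$. In the first class, every vertex weight appearing in $\omega'(T_S)$ equals its $f$-value, so $\omega'(T_S) = \omega(T_S)$ computed in $T$. In the second class, expanding $f'(v) = f(v) + f(v)f(u)g(p_u)$ gives $\omega'(T_S) = \omega_T(T_S) + \omega_T(T_S \cup \{u,p_u\})$. The key observation is that the subtrees of $T$ containing both $v_i$ and $u$ are exactly those of the form $T_S \cup \{u, p_u\}$ where $T_S$ is a subtree of $T'$ containing both $v_i$ and $v$ (since $u$ is a leaf with unique neighbor $v$, any subtree containing $u$ that is not $\{u\}$ itself must contain $v$ and $p_u$, and the constraint $v_i \neq u$ rules out $T_S = \{u\}$). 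Summing the three contributions therefore reassembles exactly $F(T; f, g; v_i)$.

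For the second identity I would use the same partitioning, now over all nonempty subtrees. The subtree $\{u\}$ alone is not captured by any subtree of $T'$, and contributes precisely $f(u)$; every other subtree of $T$ either avoids $u$ (and is in natural correspondence with a subtree of $T'$ avoiding $v$, with identical weight) or contains $\{u, v, p_u\}$ (in bijection with subtrees of $T'$ containing $v$, with weight scaled by $f(u)g(p_u)$). Expanding $f'(v)$ in $F(T'; f', g')$ exactly as before shows that $F(T'; f', g')$ accounts for all subtrees of $T$ except the singleton $\{u\}$, yielding $F(T;f,g) = F(T'; f', g') + f(u)$.

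The argument is essentially bookkeeping, so the only real obstacle is ensuring the bijection between subtrees of $T$ containing $u$ (other than $\{u\}$ itself) and subtrees of $T'$ containing $v$ is set up cleanly. The singleton case $T_S = \{u\}$ is the sole asymmetry between the two identities: in the first identity it is excluded by the hypothesis $u \neq v_i$, while in the second it produces the additive $f(u)$ term. Once this case is isolated, the weight identity $f'(v) = f(v)(1 + f(u)g(p_u))$ does all the remaining work.
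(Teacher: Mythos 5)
Your argument is correct: the case split on whether a subtree of $T'$ contains $v$, combined with expanding $f'(v)=f(v)+f(v)f(u)g(p_u)$ and the bijection between subtrees of $T$ containing $u$ (other than the singleton $\{u\}$) and subtrees of $T'$ containing $v$, is exactly the standard weight-transfer proof. The paper itself states this lemma without proof, citing Yan and Yeh, and your derivation matches the argument given there, including the correct isolation of the singleton $\{u\}$ as the source of the additive $f(u)$ term in the second identity.
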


Assume $v_0$, $v_l$ are two distinct vertices of weighted tree $T =(V(T) ,E(T);f,g)$, denote by $P_{v_0v_l}=v_0v_1\cdots v_l$ the unique path of length $l$ $(\geq 1)$ connecting $v_0$ and $v_l$ with $V(P_{v_0v_l})=\{v_i|i=0,1,\dots,l\}$, $E(P_{v_0v_l})=\{(v_i,v_{i+1})|i=0,1,\dots,l-1\}$.  Moreover, denote by $T_{v_i}$ the weighted subtree that contains vertex ${v_i}~(i=0,1,\dots,l)$ after removing all edges in $E(P_{v_0v_l})$ from $T$.

\begin{lemma}[\cite{yan06}]\label{lemma:suboddevencontwovertexlem}
With the above notations, we have
\begin{equation}
\begin{split}
&F(T; f, g;v_0,v_l)=\prod\limits_{i=0}^{l}f^*(v_i)\prod\limits_{e\in E(P_{v_0v_l})}g(e),
\end{split}
\end{equation}
where $f^*(v_i)=F(T_{v_i}; f, g;v_i)$ for any $v_i \in V(P_{v_0v_l})$.
\end{lemma}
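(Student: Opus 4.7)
The plan is to exploit the standard fact that in a tree, any connected subgraph passing through two vertices must include the unique path between them. First, I would observe that any $S \in \mathcal{ST}(T,\{v_0,v_l\})$, being a subtree of $T$ and passing through both endpoints, is forced to contain every edge of the path $P_{v_0 v_l}$; otherwise $S$ would split into several components, contradicting connectedness.

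Next, I would use the decomposition of $T$ obtained by deleting $E(P_{v_0 v_l})$: the vertex-disjoint weighted subtrees $T_{v_0}, T_{v_1}, \ldots, T_{v_l}$ together with the $l$ path edges reconstruct $T$. For each $S \in \mathcal{ST}(T,\{v_0,v_l\})$ I would set $S_i := S \cap T_{v_i}$; this is the component of $S \setminus E(P_{v_0 v_l})$ containing $v_i$, hence an element of $\mathcal{ST}(T_{v_i},\{v_i\})$. Conversely, any tuple $(S_0,\ldots,S_l) \in \prod_{i=0}^{l} \mathcal{ST}(T_{v_i},\{v_i\})$ can be glued along $P_{v_0 v_l}$ into a subtree of $T$ through $v_0$ and $v_l$: connectedness holds because each $S_i$ meets the path at $v_i$, and acyclicity is inherited from $T$. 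This yields a bijection between $\mathcal{ST}(T,\{v_0,v_l\})$ and $\prod_{i=0}^{l} \mathcal{ST}(T_{v_i},\{v_i\})$.

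Then I would verify the weight factorization. Because $V(T_{v_0}),\ldots,V(T_{v_l})$ partition $V(T)$ and $E(S)$ splits into $E(P_{v_0 v_l})$ together with the pairwise disjoint sets $E(S_i)$, the definition of $\omega$ gives
\[
\omega(S) \;=\; \prod_{e\in E(P_{v_0 v_l})} g(e)\cdot\prod_{i=0}^{l}\omega(S_i).
\]
Summing over the bijection and factoring the resulting product of independent sums yields
\[
F(T;f,g;v_0,v_l) \;=\; \prod_{e\in E(P_{v_0 v_l})} g(e)\cdot\prod_{i=0}^{l} F(T_{v_i};f,g;v_i),
\]
which is the claimed identity since $f^{*}(v_i)=F(T_{v_i};f,g;v_i)$.

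No step is genuinely delicate; the only point requiring care is the bijection, specifically confirming that gluing arbitrary $S_i$'s through the path always produces a valid (connected, acyclic) subtree of $T$. Once this combinatorial decomposition is justified, the weight identity follows immediately from the multiplicative definition of $\omega$ and no further computation is needed. Lemma~\ref{lemma:subtreecondes} is not required here, but I note that an alternative inductive proof on $\sum_i |V(T_{v_i})|$ based on repeated leaf contraction via that lemma would also work, at the cost of being less transparent than the direct bijection sketched above.
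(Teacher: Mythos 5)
Your proof is correct. The paper itself gives no proof of this lemma---it is quoted from Yan and Yeh \cite{yan06}---so there is nothing internal to compare against; but your argument (every subtree through $v_0$ and $v_l$ must contain the unique path $P_{v_0v_l}$, the residual pieces are exactly the subtrees of the components $T_{v_i}$ through $v_i$, and the weight factors multiplicatively over this decomposition) is precisely the standard decomposition underlying the cited result, and it is the same picture the authors invoke later when they contract $RPC_{n-1}$ or $RSC_{n-1}$ to a single weighted vertex. The one point you flagged as delicate---that gluing arbitrary $S_i$'s along the path always yields a valid subtree---is indeed unproblematic: connectivity holds because each $S_i$ is connected and meets the path at $v_i$, and acyclicity is automatic for any subgraph of a tree.
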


Let $U_{m,n}=(V(U_{m,n}), E(U_{m,n});f_{m,n},g_{m,n})$ be a weighted unicyclic graph of order $m\geq 2$ whose unique cycle has vertices $v_1,\ldots,v_n$ with $m\geq n\geq 2$. At each vertex $v_i$ let the tree $T_{v_i}$ rooted at $v_i$ be the component containing $v_i$ after removing the cycle. We can contract each tree $T_{v_i}$ to the vertex $v_i$ on the unique cycle with the contraction rule as in eq. \eqref{equ:genertreetrans} to make the computing more efficient.

For convenience, denote by $U_{n}=(V(U_n), E(U_n);f_n,g_n)$ the weighted unicyclic graph obtained from $U_{m,n}=(V(U_{m,n}), E(U_{m,n});f_{m,n},g_{m,n})$ by contracting each tree $T_{v_i}$ to the vertex $v_i$ on the unique cycle defined above, where $V(U_{n})=\{v_i|i=1,2,\dots,n\}$, $E(U_n)=\{(v_i,v_{i+1})|i=1,2,\dots,n\}$ (assume $v_{n+i}=v_{i}$ for all $i$), and $f_n$ is the vertex weight function and $g_n$ is the edge weight function, note that the weight of each vertex (resp. edge) may be different with each other.  Through classifying the subtrees of $U_n$ into $n$ types: subtrees that do not contain the edge $(v_1,v_n)$; subtrees that contain the edge set $\bigcup\limits_{k=-1}^{j}(v_{n-k},v_{n-k-1})$, but not edge $(v_{n-j-1},v_{n-j-2})$, where $v_{n+1}=v_1$ and $j=-1,0,\dots,n-3$. From the definitions of subtree weight and subtree generating function, Lemmas \ref{lemma:subtreecondes} and \ref{lemma:suboddevencontwovertexlem}, it is not difficult to obtain the following theorem.

\begin{theorem}\label{theorem:subtreegenfunofunicy}
Let $U_n=(V(U_n),E(U_n);f_n,g_n)$ be a weighted unicyclic graph, then
{\small\begin{equation}\label{equ:subtreegenfunofunicy}
\begin{split}
F(U_n; f_n, g_n)=&f_n(v_1)\Big(\prod\limits_{j=1}^{n-1}g_n(v_{n-j+2},v_{n-j+1})f_n(v_{n-j+1})\sum\limits_{s=j+1}^{n}\prod\limits_{k=1}^{n-s}g_n(v_{k},v_{k+1})f_n(v_{k+1})\Big)\\
&+\sum_{j=1}^{n-1}f_n(v_j)\big(\sum\limits_{s=1}^{n-j+1}\prod\limits_{k=j}^{n-s}g_n(v_{k},v_{k+1})f_n(v_{k+1})\big).
\end{split}
\end{equation}}
\end{theorem}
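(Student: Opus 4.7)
The strategy is to partition $\mathcal{ST}(U_n)$ into the $n$ classes described in the paragraph just before the theorem and compute the contribution of each class. Since every subtree is acyclic, the cycle edges of any subtree form a (possibly empty) proper arc of the cycle, so the partition into the subtrees that avoid $(v_n, v_1)$ together with, for each $j \in \{-1, 0, \ldots, n-3\}$, the subtrees whose cycle-arc is exactly $\bigcup_{k=-1}^{j}(v_{n-k}, v_{n-k-1})$ (and which omit $(v_{n-j-1}, v_{n-j-2})$) is indeed a partition.

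For the class that avoids $(v_n, v_1)$, the ambient subgraph is the path $v_1 v_2 \cdots v_n$ and the subtrees are precisely its nonempty subpaths $v_j v_{j+1} \cdots v_\ell$. Parametrizing by the left endpoint $v_j$ and writing $\ell = n - s + 1$, the total weight becomes $\sum_j f_n(v_j)\sum_s \prod_{k=j}^{n-s} g_n(v_k, v_{k+1}) f_n(v_{k+1})$, which matches the second summand of equation (\ref{equ:subtreegenfunofunicy}); the weight of each subpath factors directly from the definition of subtree weight, so no explicit appeal to Lemma \ref{lemma:subtreecondes} is needed here.

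For the class indexed by $j$, a subtree $T$ is forced to contain the backward arc $v_{n-j-1} v_{n-j} \cdots v_n v_1$ and to omit $(v_{n-j-1}, v_{n-j-2})$. Inside the ambient path obtained by deleting this forbidden edge, $T$ can extend only forward from $v_1$, so $T$ has the form $v_{n-j-1} \cdots v_n v_1 v_2 \cdots v_s$ with $1 \le s \le n-j-2$. Its weight factors as $f_n(v_1)$ times the fixed weight of the backward arc times the weight of the forward extension $v_1 \cdots v_s$; summing over $s$ and then over $j$ yields the first summand of (\ref{equ:subtreegenfunofunicy}) after reindexing the forward-extension sum via $s \leftrightarrow n - s + 1$ to bring it into the form $\sum_{s=j+1}^{n} \prod_{k=1}^{n-s} g_n(v_k, v_{k+1}) f_n(v_{k+1})$ displayed in the statement.

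The argument is conceptually a direct enumeration; the main obstacle is purely bookkeeping — identifying $\prod_{j=1}^{J} g_n(v_{n-j+2}, v_{n-j+1}) f_n(v_{n-j+1})$ with the backward-arc weight through the substitution $k = n - j + 1$, correctly treating the boundary cases $j = -1$ (backward arc is the single edge $(v_n, v_1)$ and the forward extension ranges over $v_1, \ldots, v_{n-1}$) and $j = n - 3$ (maximal backward arc with only the trivial extension), and verifying that the proposed partition covers each subtree exactly once. Lemma \ref{lemma:suboddevencontwovertexlem} can be invoked to justify the multiplicative factorization of subtree weights along each fixed path, but since $U_n$ is already the fully contracted cycle the factorization also follows immediately from the definition of $\omega(T)$.
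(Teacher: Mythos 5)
Your proof is correct and is essentially the paper's own argument: the paper offers nothing beyond the same $n$-type classification (subtrees avoiding $(v_1,v_n)$, and for each $j$ those containing a prescribed backward arc through $(v_1,v_n)$ but not the next cycle edge) together with the remark that the formula then follows from the definitions and Lemmas \ref{lemma:subtreecondes} and \ref{lemma:suboddevencontwovertexlem}, which is exactly the enumeration you carry out in detail. The only point worth recording is that your bookkeeping is in fact slightly more careful than the printed formula, whose outer $\prod\limits_{j=1}^{n-1}$ in the first summand must be read as a sum over $j$ of backward-arc weights times forward-extension sums, and whose second summand, with $j$ ranging only up to $n-1$, omits the single-vertex subtree $\{v_n\}$ that your direct enumeration of subpaths naturally includes.
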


Through similar analysis, for any fixed vertex $v_j\in V(U_{n})$, the subtree generating function of $U_{n}$ containing $v_j$ follows immediately.

\begin{theorem}\label{theorem:subtrenumthreecontainfixver}
Given weighted unicyclic graph $U_n=(V(U_n),E(U_n);f_n,g_n)$ and a fixed vertex $v_j\in V(U_{n})$, we have
{\small\begin{equation}
\label{equ:subtrenumthreecontainfixver}
\begin{split}
F(U_n; f_n, g_n; v_j)=\sum\limits_{q=1}^{n}\Big(f_n(v_j)\prod\limits_{k=0}^{q-2}g_n(v_k^1,v_k^2)f_n(v_k^2)\big(1+\sum\limits_{s=0}^{n-q-1}\prod\limits_{k=0}^{s}g_n(v_k^3,v_k^4)f_n(v_k^4)\big)\Big),
\end{split}
\end{equation}}
where $v_k^1=v_{(j+k)(\text{mod } n)}$, $v_k^2=v_{(j+k+1)(\text{mod } n)}$,  $v_k^3=v_{(n+j-k)(\text{mod } n)}$ and $v_k^4=v_{(n+j-k-1)(\text{mod } n)}$.
\end{theorem}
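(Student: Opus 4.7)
The plan is to enumerate directly the subtrees of $U_n$ that contain the fixed vertex $v_j$, mirroring the classification used in the proof of Theorem \ref{theorem:subtreegenfunofunicy} but restricted to the family $\mathcal{ST}(U_n,\{v_j\})$. Since $U_n$ is a weighted cycle, every subtree of $U_n$ is a subpath; a subpath that contains $v_j$ is therefore uniquely described by how far it extends in each of the two cyclic directions from $v_j$.

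Concretely, I would parametrize each such subpath by a pair $(q,s)$, where $q\in\{1,2,\dots,n\}$ denotes the number of consecutive vertices $v_j,v_{j+1},\dots,v_{j+q-1}$ (indices taken mod $n$) included going ``forward'' from $v_j$, and where either the subpath has no ``backward'' extension (giving the $1$ in the factor $(1+\sum_{s}\cdots)$) or it extends backward through $v_{j-1},v_{j-2},\dots,v_{j-s-1}$ for some $s\in\{0,1,\dots,n-q-1\}$. The upper cap $s\le n-q-1$ is exactly what prevents the subpath from closing up into the whole cycle (which would violate acyclicity).

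For each admissible $(q,s)$ I would then read off the weight of the corresponding subpath as the product of its vertex- and edge-weights. The forward part contributes $f_n(v_j)\prod_{k=0}^{q-2}g_n(v_{j+k},v_{j+k+1})f_n(v_{j+k+1})$, and a nonempty backward extension multiplies this by $\prod_{k=0}^{s}g_n(v_{j-k},v_{j-k-1})f_n(v_{j-k-1})$; with the substitutions $v_k^1=v_{(j+k)\bmod n}$, $v_k^2=v_{(j+k+1)\bmod n}$, $v_k^3=v_{(n+j-k)\bmod n}$, $v_k^4=v_{(n+j-k-1)\bmod n}$, summing these weights over all admissible $(q,s)$ yields exactly \eqref{equ:subtrenumthreecontainfixver}. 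Alternatively one could invoke Lemma \ref{lemma:suboddevencontwovertexlem} on the two endpoints of each subpath, but the direct enumeration is the cleaner route.

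The main obstacle is the combinatorial bookkeeping: verifying that $(q,s)\mapsto\text{subpath}$ is a bijection with $\mathcal{ST}(U_n,\{v_j\})$ (no double-counting and no omissions), handling the modular indexing consistently in the products, and checking the boundary cases $q=1$ (the forward product $\prod_{k=0}^{-1}$ is empty, recovering subtrees extending only backward or consisting of $\{v_j\}$ alone) and $q=n$ (the inner sum is empty, giving the single Hamiltonian subpath $v_j v_{j+1}\cdots v_{j-1}$). Once this accounting is checked, the formula follows by collecting terms.
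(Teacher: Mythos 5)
Your proposal is correct and matches the paper's (largely implicit) argument: the paper likewise obtains this formula by classifying the subtrees of the contracted weighted cycle containing $v_j$ according to how far they extend in each direction, which is exactly your $(q,s)$-parametrization, with the cap $s\le n-q-1$ excluding the full cycle and the boundary cases $q=1$ and $q=n$ handled as you describe. Your verification that $(q,s)\mapsto$ subpath is a bijection with $\mathcal{ST}(U_n,\{v_j\})$ is the only substantive detail, and you have it right.
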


Let $P_{v_{r_i}v_{r_j}}=v_{r_i}\cdots v_{r_j}$ be a path of $U_n$, we define the weighted unicyclic graph $U_n^c=(V(U_n^c),E(U_n^c);f_n^c,g_n^c)$ by contracting the path $P_{v_{r_i}v_{r_j}}=v_{r_i}\cdots v_{r_j}$ to $v_{r_i}$, with $V(U_n^c)=\{v_{r_i}\}\cup\{V(U_n)\backslash V(P_{v_{r_i}v_{r_j}})\}$, $E(U_n^c)=E(U_n)\backslash E(P_{v_{r_i}v_{r_j}})$,
\[
f_n^c(v_{r_i})=\prod\limits_{v\in V(P_{v_{r_i}v_{r_j}})}f_n(v)\prod\limits_{e\in E(P_{v_{r_i}v_{r_j}})}g_n(e),
\]
$f_n^c(v)=f_{n}(v)$ for $v\in V(U_n^c)\backslash v_{r_i}$, and $g_n^c(e)=g_{n}(e)$ for $e\in E(U_n^c)$. From the definitions of subtree weight and subtree generating function, with Theorem \ref{theorem:subtrenumthreecontainfixver}, we can obtain the subtree generating function of $U_{n}$ containing path $P_{v_{r_i}v_{r_j}}$ as follows.

\begin{theorem}\label{theorem:subtrenumthreecontainpath}
Assume $U_n$ and $U_n^c$ are weighted unicyclic graphs defined above, and $P_{v_{r_i}v_{r_j}}=v_{r_i}\cdots v_{r_j}$ a path of $U_n$, then
{\small\begin{equation}
\label{equ:subtrenumthreecontainpath}
\begin{split}
F(U_n; f_n, g_n; P_{v_{r_i}v_{r_j}})=F(U_n^c;f_n^c,g_n^c; v_{r_i}).
\end{split}
\end{equation}}
\end{theorem}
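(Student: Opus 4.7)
The strategy is to prove (\ref{equ:subtrenumthreecontainpath}) by exhibiting an explicit weight-preserving bijection between $\mathcal{ST}(U_n, P_{v_{r_i}v_{r_j}})$ and $\mathcal{ST}(U_n^c, v_{r_i})$. Once such a bijection is in place, both sides of the claimed identity are weighted sums over the matched terms, so the equality will follow immediately.

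First I would construct the contraction map $\phi\colon \mathcal{ST}(U_n, P_{v_{r_i}v_{r_j}}) \to \mathcal{ST}(U_n^c, v_{r_i})$ sending each subtree $T$ containing the entire path $P_{v_{r_i}v_{r_j}}$ to the subgraph obtained by collapsing $V(P_{v_{r_i}v_{r_j}})$ to the single vertex $v_{r_i}$ and removing all edges of $E(P_{v_{r_i}v_{r_j}})$. Because $T$ already contains the connected path $P_{v_{r_i}v_{r_j}}$ and $U_n$ is a simple cycle (so the only edges of $T$ incident to an interior vertex of the path lie on the path itself), the image $\phi(T)$ remains connected and acyclic, and contains $v_{r_i}$ by construction. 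The natural inverse $\psi$ sends $S \in \mathcal{ST}(U_n^c, v_{r_i})$ to the subtree obtained by reinstating $V(P_{v_{r_i}v_{r_j}}) \setminus \{v_{r_i}\}$ and $E(P_{v_{r_i}v_{r_j}})$ at $v_{r_i}$ and reattaching any edge of $S$ previously incident to $v_{r_i}$ at its unique original endpoint on the path in $U_n$. Since $P_{v_{r_i}v_{r_j}}$ is a sub-path of the cycle $U_n$, this reattachment is unambiguous and $\phi$, $\psi$ are mutually inverse.

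Next I would verify weight preservation: by the definition of $U_n^c$,
\[
f_n^c(v_{r_i})=\prod_{v\in V(P_{v_{r_i}v_{r_j}})}f_n(v)\prod_{e\in E(P_{v_{r_i}v_{r_j}})}g_n(e),
\]
while $f_n^c$ and $g_n^c$ agree with $f_n$ and $g_n$ on every other vertex and edge of $U_n^c$. Splitting $\omega(T)$ into the contribution from $V(P_{v_{r_i}v_{r_j}})\cup E(P_{v_{r_i}v_{r_j}})$ and the rest, and collapsing the former into the single factor $f_n^c(v_{r_i})$, shows $\omega(T)=\omega(\phi(T))$. Summing over $T$ on one side and over $\phi(T)$ on the other through the bijection yields
\[
F(U_n;f_n,g_n;P_{v_{r_i}v_{r_j}}) = \sum_{S\in\mathcal{ST}(U_n^c,v_{r_i})}\omega(S) = F(U_n^c;f_n^c,g_n^c;v_{r_i}),
\]
which is exactly (\ref{equ:subtrenumthreecontainpath}).

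The main obstacle I foresee is the rigorous setup of $\phi$ and $\psi$ as literal inverses between subtree families of two different graphs, in particular ensuring that distinct edges of $T$ emanating from different vertices of $P_{v_{r_i}v_{r_j}}$ remain distinguishable in $\phi(T)$ and are correctly restored by $\psi$. Because each interior vertex of $P_{v_{r_i}v_{r_j}}$ has both of its $U_n$-neighbors on $P$, and each endpoint has exactly one external cycle neighbor, the identification of external edges at $v_{r_i}$ in $U_n^c$ with their pre-contraction incidences in $U_n$ is forced. An equivalent but more algebraic route would be to apply the tree reduction of Lemma \ref{lemma:subtreecondes} iteratively along the interior edges of $P_{v_{r_i}v_{r_j}}$ to absorb all path weights into $f_n^c(v_{r_i})$ and then compare with Theorem \ref{theorem:subtrenumthreecontainfixver}; the bijective route, however, is shorter and more transparent.
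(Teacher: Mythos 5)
Your argument is correct and is essentially the paper's own justification made explicit: the paper derives Theorem \ref{theorem:subtrenumthreecontainpath} directly from the definitions of subtree weight and the contraction $U_n^c$, i.e.\ exactly the weight-preserving correspondence between subtrees containing $P_{v_{r_i}v_{r_j}}$ and subtrees of $U_n^c$ containing $v_{r_i}$ that you spell out as a bijection. Your additional checks (that interior path vertices have no external neighbours because $U_n$ is a weighted cycle, and that the two endpoint edges reattach unambiguously) are exactly the details the paper leaves implicit, so nothing is missing.
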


Assume $v_i$, $v_j$ are two distinct vertices of $U_n$, the two paths connecting $v_i$ and $v_j$ are denoted by
\[
P_{v_{i}v_{j}}^1=v_{i}v_{(i+1)(\text{mod } n)}\cdots v_{(j-1)(\text{mod } n)}v_{j}
\]
and
\[
P_{v_{i}v_{j}}^2=v_{i}v_{(i-1)(\text{mod } n)}\cdots v_{(j+1)(\text{mod } n)}v_{j},
\]
respectively.

We define the weighted unicyclic graph $U_n^{c_1}=(V(U_n^{c_1}),E(U_n^{c_1});f_n^{c_1},g_n^{c_1})$ from $U_n$ by contracting the path $P_{v_{i}v_{j}}^1$ to $v_{i}$, with $V(U_n^{c_1})=\{v_{i}\}\cup\{V(U_n)\backslash V(P_{v_{i}v_{j}}^1)\}$, $E(U_n^{c_1})=E(U_n)\backslash E(P_{v_{i}v_{j}}^1)$,
\[
f_n^{c_1}(v_{i})=\prod\limits_{v\in V(P_{v_{i}v_{j}}^1)}f_n(v)\prod\limits_{e\in E(P_{v_{i}v_{j}}^1)}g_n(e),
\]
$f_n^{c_1}(v)=f_{n}(v)$ for $v\in V(U_n^{c_1})\backslash v_{i}$, and $g_n^{c_1}(e)=g_{n}(e)$ for $e\in E(U_n^{c_1})$.

Similarly, we define the weighted unicyclic graph $U_n^{c_2}=(V(U_n^{c_2}),E(U_n^{c_2});f_n^{c_2},g_n^{c_2})$ from $U_n$ by contracting the path $P_{v_{i}v_{j}}^2$ to $v_{i}$, with $V(U_n^{c_2})=\{v_{i}\}\cup\{V(U_n)\backslash V(P_{v_{i}v_{j}}^2)\}$, $E(U_n^{c_2})=E(U_n)\backslash E(P_{v_{i}v_{j}}^2)$,
\[
f_n^{c_2}(v_{i})=\prod\limits_{v\in V(P_{v_{i}v_{j}}^2)}f_n(v)\prod\limits_{e\in E(P_{v_{i}v_{j}}^2)}g_n(e),
\]
$f_n^{c_2}(v)=f_{n}(v)$ for $v\in V(U_n^{c_2})\backslash v_{i}$, and $g_n^{c_2}(e)=g_{n}(e)$ for $e\in E(U_n^{c_2})$.

From the definitions of subtree weight and subtree generating function, with Theorem \ref{theorem:subtrenumthreecontainpath}, we can obtain the subtree generating function of $U_{n}$ containing any prescribed two distinct vertices of $U_n$ as follows.
\begin{corollary}\label{coro:subtrenumthreecontaintwovert}
Assume $U_n$, $U_n^{c_1}$ and $U_n^{c_2}$ are weighted unicyclic graph defined above, and $v_i$, $v_j$ are two prescribed distinct vertices of $U_n$, then
{\small\begin{equation}
\label{equ:subtrenumthreecontaintwovert}
\begin{split}
F(U_n; f_n, g_n; v_i,v_j)=F(U_n^{c_1};f_n^{c_1},g_n^{c_1}; v_{i})+F(U_n^{c_2};f_n^{c_2},g_n^{c_2}; v_{i}).
\end{split}
\end{equation}}
\end{corollary}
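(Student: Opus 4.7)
The plan is to exploit the fact that $U_n$ is unicyclic, so between any two distinct vertices $v_i, v_j$ on the unique cycle there are exactly two paths, namely the prescribed $P^1_{v_iv_j}$ and $P^2_{v_iv_j}$, whose union is the whole cycle. Any subtree $T \in \mathcal{ST}(U_n, \{v_i,v_j\})$ must contain a path connecting $v_i$ and $v_j$, so it must contain at least one of $P^1_{v_iv_j}$ or $P^2_{v_iv_j}$. Conversely, since $T$ is acyclic, it cannot contain both, for otherwise $E(T)$ would include $E(P^1_{v_iv_j}) \cup E(P^2_{v_iv_j})$, which is precisely the edge set of the cycle of $U_n$.

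With this dichotomy in hand, the first step is to write
\[
\mathcal{ST}(U_n, \{v_i,v_j\}) = \mathcal{ST}(U_n, E(P^1_{v_iv_j})) \;\sqcup\; \mathcal{ST}(U_n, E(P^2_{v_iv_j})),
\]
a disjoint union by the argument above. Summing the subtree weight $\omega(T)$ over each side and invoking the definition of the subtree generating function then yields
\[
F(U_n; f_n, g_n; v_i, v_j) = F(U_n; f_n, g_n; E(P^1_{v_iv_j})) + F(U_n; f_n, g_n; E(P^2_{v_iv_j})).
\]
The second step is to apply Theorem \ref{theorem:subtrenumthreecontainpath} to each of the two terms on the right: that theorem rewrites the subtree generating function of $U_n$ conditioned on containing a given path as the subtree generating function of the contracted unicyclic graph conditioned on containing the contraction vertex. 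Applied to $P^1_{v_iv_j}$ this gives $F(U_n^{c_1}; f_n^{c_1}, g_n^{c_1}; v_i)$ and applied to $P^2_{v_iv_j}$ this gives $F(U_n^{c_2}; f_n^{c_2}, g_n^{c_2}; v_i)$, and combining these produces the claimed identity.

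The only real point requiring care is the disjointness and exhaustiveness of the partition, i.e.\ justifying that every subtree containing $\{v_i,v_j\}$ contains exactly one of the two $v_i$--$v_j$ paths; once that is established, the rest reduces to the weight-preservation built into the contraction construction and a direct invocation of Theorem \ref{theorem:subtrenumthreecontainpath}. I expect no further computational obstacle since the contractions $U_n^{c_1}$ and $U_n^{c_2}$ were designed precisely so that the weight of the contraction vertex absorbs the contribution $\prod f_n(v)\prod g_n(e)$ of the collapsed path, which matches the weight factor produced on the left-hand side.
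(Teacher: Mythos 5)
Your proof is correct and follows essentially the same route as the paper: the paper derives the corollary directly from Theorem~\ref{theorem:subtrenumthreecontainpath} by splitting the subtrees containing $\{v_i,v_j\}$ according to which of the two $v_i$--$v_j$ arcs of the cycle they contain, which is exactly the disjoint decomposition you make explicit (and justify via acyclicity). Your write-up simply fills in the details the paper leaves implicit.
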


A polyphenylene chain $RPC_{n}$ with $n$ hexagons can be obtained by adjoining a polyphenylene chain $RPC_{n-1}$ with $n-1$ hexagons and a new terminal hexagon $H_n$ with a cut edge (see Fig.~\ref{fig:aspiroandpolylene}), for $n\geq3$, the terminal hexagon can be attached in three different ways, which results in the local arrangements we describe as $RPC_{n+1}^1$, $RPC_{n+1}^2$, $RPC_{n+1}^3$ (see Fig.~\ref{fig:threekindsofranrpc}).

A random polyphenylene chain $RPC(n,p_1,p_2)$ with $n$ hexagons is a polyphenylene chain obtained by step-wise addition of terminal hexagons. At each step $(i=3, 4,\dots, n)$, a random selection is made from one of the three possible constructions:

(1) $RPC_{i-1}\rightarrow RPC_{i}^1$ with probability $p_1$,

(2) $RPC_{i-1}\rightarrow RPC_{i}^2$ with probability $p_2$,

(3) $RPC_{i-1}\rightarrow RPC_{i}^3$ with probability $1-p_1-p_2$,

Here the probabilities $p_1$ and $p_2$ are constants. Namely, the process described is a zeroth-order Markov Process.

\begin{figure}[!htbp]
  \centering
  \subfigure[A polyphenylene chain $RPC_n$ with $n$ hexagons.]{
    \label{fig:polylenechain:a}
    \includegraphics[width=0.35\textwidth]{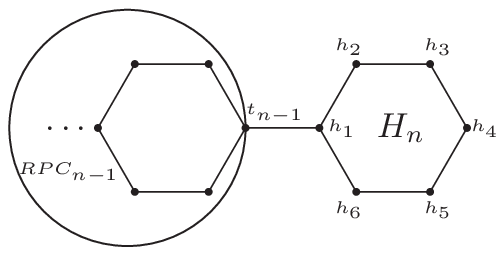}}
  \hspace{0.8in}
  \subfigure[A spiro chain $RSC_n$ with $n$ hexagons.]{
    \label{fig:spriochain:b}
    \includegraphics[width=0.4\textwidth]{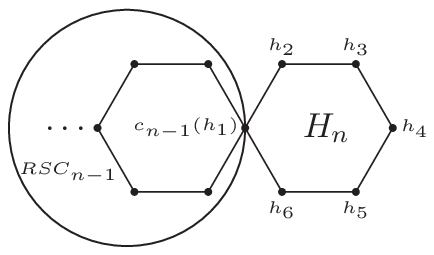}}
  \caption{A polyphenylene chain $RPC_n$ and a spiro chain $RSC_n$.}
  \label{fig:aspiroandpolylene} 
\end{figure}

\begin{figure}[!htbp]
\centering
\includegraphics[width=\textwidth]{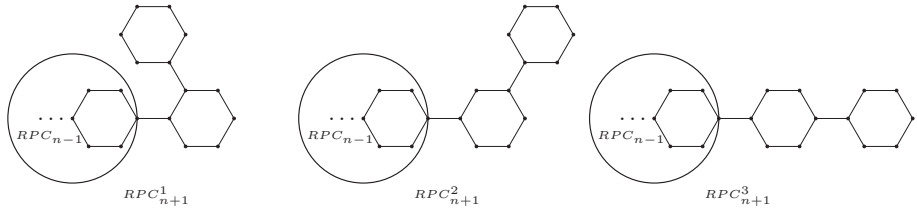}\\
\caption{Three types of local arrangements in polyphenylene chains.}
\label{fig:threekindsofranrpc}
\end{figure}

\begin{figure}[!htbp]
\centering
\includegraphics[width=\textwidth]{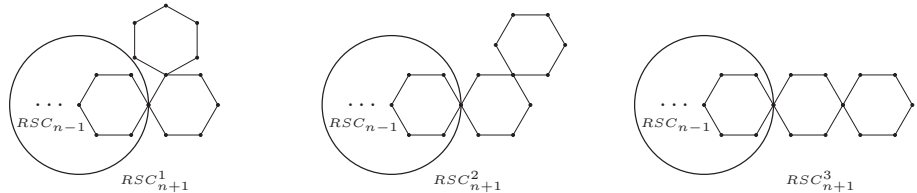}\\
\caption{Three types of local arrangements in spiro chains.}
\label{fig:threekindsofranrsc}
\end{figure}

Similarly, a spiro chain $RSC_{n}$ with $n$ hexagons can be obtained by adjoining a new terminal hexagon $H_n$ to a spiro chain $RSC_{n-1}$ with $n-1$ hexagons (see Fig.~\ref{fig:aspiroandpolylene}), for $n\geq3$, the terminal hexagon can be attached in three different ways, which results in the local arrangements we describe as $RSC_{n+1}^1$, $RSC_{n+1}^2$, $RSC_{n+1}^3$ (see Fig.~\ref{fig:threekindsofranrsc}).

And a random spiro chain $RSC(n,p_1,p_2)$ with $n$ hexagons is a spiro chain obtained by stepwise addition of terminal hexagons. At each step $(i=3, 4,\dots, n)$, a random selection is made from one of the three possible constructions:

(1) $RSC_{i-1}\rightarrow RSC_{i}^1$ with probability $p_1$,

(2) $RSC_{i-1}\rightarrow RSC_{i}^2$ with probability $p_2$,

(3) $RSC_{i-1}\rightarrow RSC_{i}^3$ with probability $1-p_1-p_2$,

Here the probabilities $p_1$ and $p_2$ are constants.

Specially, the random polyphenylene chain $RPC(n,1,0)$,  $RPC(n,0,1)$ and $RPC(n,0,0)$ are the polyphenylene ortho-chain $\overline{O}_n$, meta-chain $\overline{M}_n$, para-chain $\overline{P}_n$, by setting $(p_1,p_2)=(1,0),(0,1)$, $(0,0)$, respectively, see Fig.~\ref{fig:threekindsofranrpc}. And similarly, the random spiro chain $RSC(n,1,0)$, $RSC(n,0,1)$,  $RSC(n,0,0)$ are the spiro ortho-chain $O_n$, meta-chain $M_n$ and para-chain $P_n$, respectively, see Fig.~\ref{fig:threekindsofranrsc}.

\section{The expected value of the subtree number index of random polyphenylene and spiro chains}
\label{sec:subtreebcsubtreeofphcandspc}

\subsection{Random polyphenylene chain}

Firstly, we study the subtree number index of the random polyphenylene chain.

\begin{theorem}\label{theo:expsubtreeofrpc}
For $n\geq 1$, the expected value of the subtree number index of random polyphenylene $RPC(n,p_1,p_2)$ is
\begin{equation}
\label{equ:subtreenumofrpc}
\begin{split}
E(\emph{STN}(RPC(n,p_1,p_2)))=&\frac{441}{(11+4p_1+p_2)^2}(12+4p_1+p_2)^n+\frac{144p_1+36p_2-45}{11+4p_1+p_2}n\\
&-\frac{441}{(11+4p_1+p_2)^2}.
\end{split}
\nonumber
\end{equation}

\end{theorem}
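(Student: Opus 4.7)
The plan is to derive a linear recurrence for $E[\emph{STN}(RPC_n)]$ that comes from decomposing every subtree of $RPC_n$ according to whether it uses the cut edge $e_n=(u_{n-1},v_n)$ joining the new hexagon $H_n\cong C_6$ to $RPC_{n-1}$. Here $u_{n-1}\in H_{n-1}$ is determined by the random attachment type $\alpha_n\in\{o,m,p\}$ (with probabilities $p_1,\,p_2,\,1-p_1-p_2$), and $v_n\in H_n$ is the corresponding glueing vertex. Subtrees that do not use $e_n$ lie entirely in $RPC_{n-1}$ or entirely in $H_n$, while every subtree containing $e_n$ factors uniquely into a subtree of $RPC_{n-1}$ containing $u_{n-1}$ together with a subtree of $H_n$ containing $v_n$. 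This yields the pathwise identity
\[
\emph{STN}(RPC_n)=\emph{STN}(RPC_{n-1})+\emph{STN}(C_6)+\eta(RPC_{n-1},u_{n-1})\cdot\eta(C_6,v_n).
\]

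To close the recursion I introduce auxiliary statistics for the terminal hexagon. Let $A_n,B_n,C_n$ denote the number of subtrees of $RPC_n$ that contain the ortho, meta, para vertex of $H_n$ relative to $v_n$, respectively. Applying the same cut-edge decomposition, now keeping a distinguished vertex of $H_n$ fixed, gives
\[
A_n=21+16\,X_{n-1},\qquad B_n=21+13\,X_{n-1},\qquad C_n=21+12\,X_{n-1},
\]
where $X_{n-1}\in\{A_{n-1},B_{n-1},C_{n-1}\}$ is selected by $\alpha_n$. The numerical constants come from the paper's earlier machinery applied to $C_6$: Theorem~\ref{theorem:subtreegenfunofunicy} with all weights equal to $1$ gives $\emph{STN}(C_6)=36$; Theorem~\ref{theorem:subtrenumthreecontainfixver} gives $\eta(C_6,v)=21$ at every vertex; and Corollary~\ref{coro:subtrenumthreecontaintwovert} gives $\eta(C_6,v,w)=16,\,13,\,12$ at graph distances $d(v,w)=1,2,3$, respectively.

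Taking expectations and setting $Y_n=p_1\,E[A_n]+p_2\,E[B_n]+(1-p_1-p_2)E[C_n]$, conditioning on $\alpha_n$ and combining the three one-step relations collapses everything to the single scalar recurrence
\[
Y_n=(12+4p_1+p_2)\,Y_{n-1}+21,\qquad Y_1=21,
\]
where the initial value follows from $A_1=B_1=C_1=21$ by the symmetry of $C_6$. With $a=12+4p_1+p_2$ this first-order linear recurrence solves to $Y_n=21(a^n-1)/(a-1)$. The pathwise identity taken in expectation reads
\[
E[\emph{STN}(RPC_n)]=E[\emph{STN}(RPC_{n-1})]+36+21\,Y_{n-1},
\]
and iterating from $n=2$ with $\emph{STN}(RPC_1)=36$ reduces the problem to a geometric sum plus an arithmetic term in $n$; using the simplification $36(a-1)-441=144p_1+36p_2-45$ delivers the stated closed form.

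The main obstacle I anticipate is the bookkeeping rather than any conceptual difficulty: one must track three auxiliary statistics rather than one, verify that they close under expectation through the single aggregate $Y_n$, and correctly extract the $C_6$ constants $36,\,21,\,16,\,13,\,12$ from the general theorems of Section~\ref{sec:generalnotationsandrelated}. Once these preparatory pieces are in place, the reduction to a first-order scalar recurrence makes the final closed-form formula a routine algebraic computation.
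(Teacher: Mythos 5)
Your proposal is correct and follows essentially the same route as the paper: the cut-edge decomposition with the $C_6$ constants $36,\,21,\,16,\,13,\,12$, an auxiliary first-order recurrence $Y_n=(12+4p_1+p_2)Y_{n-1}+21$ for the expected subtree count at the attachment vertex (your $Y_n$ is exactly the paper's $T_n$, just reached by averaging three position-specific counts rather than tracking the random vertex directly), and then summing the resulting geometric series. The bookkeeping and final algebra all check out.
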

\begin{proof}
It is easy to know that $E(\emph{STN}(RPC(1,p_1,p_2)))=36$, for $n\geq 2$, we categorize the subtrees of the random polyphenylene chain $RPC_n$ into two cases:

(i) not containing edge $(t_{n-1},h_{1})$,

(ii) containing edge $(t_{n-1},h_{1})$.

It is easy to see that the subtree number of case (i) is
\begin{equation}
\label{equ:subtreenumofrpc01}
\begin{split}
\emph{STN}(RPC_{n-1})+36.
\end{split}
\end{equation}

The subtree set of case (ii), denoted by $RPC_{n}(t_{n-1},h_{1})$  can be described as:
\begin{equation}
\label{equ:subtreenumofrpc02}
\begin{split}
RPC_{n}(t_{n-1},h_{1})=\{T_l+(t_{n-1},h_{1})+T_r|T_l\in RPC_{n-1}(t_{n-1}), T_r\in H_{n}(h_{1})\},
\end{split}
\end{equation}
where $T_l+(t_{n-1},h_{1})+T_r$ is the subtree obtained by connecting subtree $T_l\in RPC_{n-1}(t_{n-1})$ (subtree of $RPC_{n-1}$ containing vertex $t_{n-1}$) and subtree $T_r\in H_{n}(h_{1})$ (subtree of $H_{n}$ containing vertex $h_{1}$) with edge $(t_{n-1},h_{1})$, see Fig.~\ref{fig:polylenechain:a}.

Since the subtree number of $H_{n}$ containing vertex $h_{1}$ is 21, by eq.~\eqref{equ:subtreenumofrpc02}, we have the subtree number of case (ii) as
\begin{equation}
\label{equ:subtreenumofrpcconcomm}
\begin{split}
21\emph{STN}(RPC_{n-1}(t_{n-1})).
\end{split}
\end{equation}

Thus by eqs.~\eqref{equ:subtreenumofrpc01} and \eqref{equ:subtreenumofrpcconcomm}, we have
\begin{equation}
\label{equ:subtreenumofrpc1}
\begin{split}
\emph{STN}(RPC_n)=\emph{STN}(RPC_{n-1})+21\emph{STN}(RPC_{n-1}(t_{n-1}))+36.
\end{split}
\end{equation}

Namely,
\begin{equation}
\label{equ:subtreenumofrpc11}
\begin{split}
\emph{STN}(RPC_{n+1})=\emph{STN}(RPC_{n})+21\emph{STN}(RPC_{n}(t_{n}))+36.
\end{split}
\end{equation}

For a random polyphenylene chain $RPC(n,p_1,p_2)$, the subtree number of $RPC(n,p_1,p_2)$ containing vertex $t_{n}$ is a random variable, and its expected value is denoted by
\begin{equation}
\label{equ:subtreenumofrpc2}
\begin{split}
T_{n}=E(\emph{STN}(RPC(n,p_1,p_2;t_{n}))).
\end{split}
\end{equation}
By the expectation operator and eqs.~\eqref{equ:subtreenumofrpc11} and \eqref{equ:subtreenumofrpc2}, we can obtain a recursive relation for the expected value of the subtree number index of a random polyphenylene chain $RPC(n,p_1,p_2)$
\begin{equation}
\label{equ:subtreenumofrpc3}
\begin{split}
E(\emph{STN}(RPC(n+1,p_1,p_2)))=E(\emph{STN}(RPC(n,p_1,p_2)))+21T_{n}+36.
\end{split}
\end{equation}

Now, we consider computing $T_{n}$. Take the weighted random polyphenylene chain $RPC(n-1,p_1,p_2)$ as a single vertex $``t_{n-1}"$ whose weight is $\emph{STN}(RPC(n-1,p_1,p_2;t_{n-1}))$, see Fig.~\ref{fig:polylenechain:a}, then, with Theorem \ref{theorem:subtrenumthreecontainfixver}, we have
\begin{enumerate}[(i)]
  \item If $RPC_{n}\rightarrow RPC_{n+1}^1$ with probability $p_1$,
   $$\emph{STN}(RPC(n,p_1,p_2;t_{n}))=21+16\emph{STN}(RPC(n-1,p_1,p_2;t_{n-1}))$$
   with probability $p_1$.
  \item If $RPC_{n}\rightarrow RPC_{n+1}^2$ with probability $p_2$,
   $$\emph{STN}(RPC(n,p_1,p_2;t_{n}))=21+13\emph{STN}(RPC(n-1,p_1,p_2;t_{n-1}))$$
   with probability $p_2$.
  \item If $RPC_{n}\rightarrow RPC_{n+1}^3$ with probability $1-p_1-p_2$,
   $$\emph{STN}(RPC(n,p_1,p_2;t_{n}))=21+12\emph{STN}(RPC(n-1,p_1,p_2;t_{n-1}))$$
   with probability $1-p_1-p_2$.
\end{enumerate}

From (i)-(iii) above, we immediately obtain
\begin{equation}
\label{equ:subtreenumofrpctn-1}
\begin{split}
T_{n}=&p_1[21+16\emph{STN}(RPC(n-1,p_1,p_2;t_{n-1}))]+p_2[21+13\emph{STN}(RPC(n-1,p_1,p_2;t_{n-1}))]\\
        &+(1-p_1-p_2)[21+12\emph{STN}(RPC(n-1,p_1,p_2;t_{n-1}))]\\
        =&(12+4p_1+p_2)\emph{STN}(RPC(n-1,p_1,p_2;t_{n-1})+21.
\end{split}
\end{equation}

By applying the expectation operator to the above eq.~\eqref{equ:subtreenumofrpctn-1}, we obtain
\begin{equation}
\label{equ:subtreenumofrpctn-1step1}
\begin{split}
T_{n}=(12+4p_1+p_2)T_{n-1}+21.
\end{split}
\end{equation}

Since $T_{1}=21$, using the above recurrence relation, we have
\begin{equation}
\label{equ:subtreenumofrpctn-1step2}
\begin{split}
T_{n}=\frac{21}{11+4p_1+p_2}(12+4p_1+p_2)^{n}-\frac{21}{11+4p_1+p_2}.
\end{split}
\end{equation}

From eq.~\eqref{equ:subtreenumofrpc3}, we have
\begin{equation}
\label{equ:subtreenumofrpc3add}
\begin{split}
E(\emph{STN}(RPC(n+1,p_1,p_2)))=&21[\frac{21}{11+4p_1+p_2}(12+4p_1+p_2)^{n}-\frac{21}{11+4p_1+p_2}]\\
 &+E(\emph{STN}(RPC(n,p_1,p_2)))+36\\
 =&\frac{441}{11+4p_1+p_2}(12+4p_1+p_2)^n-\frac{441}{11+4p_1+p_2}\\
 &+E(\emph{STN}(RPC(n,p_1,p_2)))+36.
\end{split}
\end{equation}

Using the above recurrence relation, we have
\begin{equation}
\label{equ:subtreenumofrpctn-1step3}
\begin{split}
E(\emph{STN}(RPC(n,p_1,p_2)))=&\frac{441(12+4p_1+p_2)}{(11+4p_1+p_2)^2}\big((12+4p_1+p_2)^{n-1}-1\big)\\
&+(36-\frac{441}{11+4p_1+p_2})(n-1)+36.
\end{split}
\end{equation}

The theorem thus follows.

\end{proof}
Specially, by taking $(p_1,p_2)=(1,0),(0,1)$ or $(0,0)$, respectively, and Theorem \ref{theo:expsubtreeofrpc},
we have the following.

\begin{corollary}\label{cor:subtrenumthreespeciapccexac}
The subtree number indices of the polyphenylene ortho-chain $\overline{O}_n$, meta-chain $\overline{M}_n$ and para-chain $\overline{P}_n$ are
\begin{equation}
\label{equ:subtrenumberthreespeciaphc}
\begin{split}
&{STN}(\overline{O}_n)={\frac{49(16^{n}-1)}{25}}+{\frac{33n}{5}},\\
&{STN}(\overline{M}_n)={\frac{49(13^{n}-1)}{16}}-{\frac{3n}{4}},\\
&{STN}(\overline{P}_n)={\frac{441(12^{n}-1)}{121}}-{\frac{45n}{11}}.
\end{split}
\nonumber
\end{equation}
\end{corollary}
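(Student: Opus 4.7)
The plan is to obtain the three formulas as a direct specialization of Theorem \ref{theo:expsubtreeofrpc}. Observe that when $(p_1,p_2)$ equals $(1,0)$, $(0,1)$, or $(0,0)$, the Markov process that defines $RPC(n,p_1,p_2)$ becomes degenerate: at every step the transition $RPC_{i-1}\to RPC_i^k$ is forced for a single $k\in\{1,2,3\}$, so the random chain collapses to the deterministic chain $\overline{O}_n$, $\overline{M}_n$, or $\overline{P}_n$ respectively. Consequently $E(\emph{STN}(RPC(n,p_1,p_2))) = \emph{STN}(\overline{O}_n)$, $\emph{STN}(\overline{M}_n)$, or $\emph{STN}(\overline{P}_n)$ in the three cases, and it suffices to substitute.

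First I would handle the ortho-chain by setting $(p_1,p_2)=(1,0)$ in the formula of Theorem \ref{theo:expsubtreeofrpc}. The quantities $11+4p_1+p_2$, $12+4p_1+p_2$ and $144p_1+36p_2-45$ become $15$, $16$ and $99$, so the three summands of the theorem simplify to $\frac{441}{225}\cdot 16^n$, $\frac{99}{15}n$ and $-\frac{441}{225}$, which reduce to $\frac{49}{25}(16^n-1)+\frac{33}{5}n$ after collecting the constant terms.

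Next I would repeat the substitution for the meta-chain at $(p_1,p_2)=(0,1)$, giving $12$, $13$, $-9$ for the three bracketed quantities, and for the para-chain at $(p_1,p_2)=(0,0)$, giving $11$, $12$, $-45$. In each case the algebra is of the same shape as the ortho-chain computation: combine the exponential term with the constant $-\frac{441}{(11+4p_1+p_2)^2}$ as $\frac{441}{(11+4p_1+p_2)^2}\big((12+4p_1+p_2)^n-1\big)$ and rewrite the linear term. This yields $\frac{49}{16}(13^n-1)-\frac{3}{4}n$ and $\frac{441}{121}(12^n-1)-\frac{45}{11}n$ respectively, matching the claimed expressions.

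There is no real obstacle here: the entire argument is a bookkeeping exercise once one recognizes that the chains in question are exactly the deterministic realizations of $RPC(n,p_1,p_2)$. The only thing to be mildly careful about is the arithmetic simplification of the fractions $\frac{441}{225}$, $\frac{441}{144}$, $\frac{441}{121}$ and of the linear coefficients $\frac{99}{15}$, $-\frac{9}{12}$, $-\frac{45}{11}$, which reduce to the forms stated in the corollary.
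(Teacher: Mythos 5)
Your proposal is correct and is exactly the paper's own argument: the corollary is obtained by direct substitution of $(p_1,p_2)=(1,0),(0,1),(0,0)$ into Theorem \ref{theo:expsubtreeofrpc}, noting that these choices make the random chain deterministic. The arithmetic simplifications you list ($\tfrac{441}{225}=\tfrac{49}{25}$, $\tfrac{441}{144}=\tfrac{49}{16}$, etc.) all check out.
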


The results of Corollary \ref{cor:subtrenumthreespeciapccexac} agree with the subtree numbers of $\overline{O}_n$, $\overline{M}_n$ and $\overline{P}_n$ presented in \cite{yang2015subtrees}.

\subsection{Random spiro chain}
\label{sec:subtreebcsubnumofshcs}

\begin{theorem}\label{theo:exvaluesubtreeofrsc}
For $n\geq 1$, the expected value of the subtree number index of a random spiro chain $RSC(n,p_1,p_2)$ is
\begin{equation}
\label{equ:subtreenumofrpc}
\begin{split}
E(\emph{STN}(RSC(n,p_1,p_2)))=&\frac{400}{(11+4p_1+p_2)^2}(12+4p_1+p_2)^{n}+\frac{140p_1+35p_2-15}{11+4p_1+p_2}n\\
&-\frac{400}{(11+4p_1+p_2)^2}+1.
\end{split}
\nonumber
\end{equation}

\end{theorem}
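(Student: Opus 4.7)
The plan is to follow the same template as the proof of Theorem \ref{theo:expsubtreeofrpc}, adapted to the fact that in $RSC_n$ the new hexagon $H_n$ is attached to $RSC_{n-1}$ at a shared spiro vertex $h_1 = t_{n-1}$ rather than through a cut edge. I first classify the subtrees of $RSC_n$ according to whether they contain this spiro vertex. Subtrees missing it split into subtrees of $RSC_{n-1}$ missing $t_{n-1}$ and subtrees of $H_n = C_6$ missing $h_1$ (there are $36 - 21 = 15$ of the latter). Subtrees containing it decompose uniquely as $T_L \cup T_R$, where $T_L$ is a subtree of $RSC_{n-1}$ through $t_{n-1}$ and $T_R$ is a subtree of $H_n$ through $h_1$, glued at the shared vertex, contributing $21 \cdot \emph{STN}(RSC_{n-1};t_{n-1})$ subtrees. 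Summing produces the recurrence
\[
\emph{STN}(RSC_n) = \emph{STN}(RSC_{n-1}) + 20 \cdot \emph{STN}(RSC_{n-1};t_{n-1}) + 15,
\]
and taking expectations gives
\[
E(\emph{STN}(RSC(n+1,p_1,p_2))) = E(\emph{STN}(RSC(n,p_1,p_2))) + 20\,T_n + 15,
\]
where $T_n := E(\emph{STN}(RSC(n,p_1,p_2);t_n))$.

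Next, I derive a recurrence for $T_n$ by conditioning on the type of attachment at step $n$. In the ortho, meta, and para cases the new terminal $t_n$ sits at $h_2$, $h_3$, or $h_4$ of $H_n$, respectively. Contracting $RSC_{n-1}$ to a weighted vertex at $t_{n-1}$ with weight $\emph{STN}(RSC_{n-1};t_{n-1})$ and applying Theorem \ref{theorem:subtrenumthreecontainfixver} to $H_n$ with the prescribed $t_n$ expresses $\emph{STN}(RSC_n;t_n)$ as a linear polynomial in $\emph{STN}(RSC_{n-1};t_{n-1})$. The constant term counts subtrees of $H_n$ through $t_n$ and missing $h_1$, i.e., subtrees of $H_n \setminus \{h_1\} \cong P_5$ through the image of $t_n$, giving $5$, $8$, $9$ respectively; the linear coefficient counts subtrees of $C_6$ containing both $h_1$ and the designated $t_n$, giving $21-5=16$, $21-8=13$, $21-9=12$ respectively. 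Averaging with probabilities $p_1, p_2, 1-p_1-p_2$ yields
\[
T_n = (12+4p_1+p_2)\,T_{n-1} + (9-4p_1-p_2),
\]
with initial value $T_1 = 21$.

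Finally, I solve this first-order linear recurrence for $T_n$ in closed form and substitute into the recurrence for $E(\emph{STN}(RSC(n,p_1,p_2)))$, summing the resulting geometric series together with the initial condition $E(\emph{STN}(RSC(1,p_1,p_2))) = 36$. The two cascading geometric sums, both with common ratio $12+4p_1+p_2$, produce the prefactor $400 = 20^2$ and the quadratic denominator $(11+4p_1+p_2)^2$ appearing in the stated formula, while the constant term $+1$ comes from collapsing $36$ together with the residual terms from the two geometric summations. I expect the main bookkeeping obstacle to be correctly identifying the six hexagon counts $(5,8,9)$ and $(16,13,12)$ for the three attachment types; once those are in place, the rest of the derivation runs exactly in parallel with the proof of Theorem \ref{theo:expsubtreeofrpc}.
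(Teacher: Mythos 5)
Your proposal is correct and follows essentially the same route as the paper: the same recurrence $\emph{STN}(RSC_n)=\emph{STN}(RSC_{n-1})+20\,\emph{STN}(RSC_{n-1}(c_{n-1}))+15$, the same six hexagon counts $(5,8,9)$ and $(16,13,12)$ for the three attachment types, and the same pair of first-order linear recurrences solved in cascade. The only cosmetic difference is that you derive the first recurrence by splitting on whether a subtree contains the spiro vertex (getting $21-1=20$), whereas the paper splits into four cases according to containment of the two edges $(c_{n-1},h_2)$ and $(c_{n-1},h_6)$ (getting $5+5+10=20$); both are correct and equivalent.
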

\begin{proof}
It is not difficult to obtain that $E(\emph{STN}(RSC(1,p_1,p_2)))=36$, for $n\geq 2$, assume that the terminal hexagon is spanned by vertices $h_2$, $h_3$, $h_4$, $h_5$,
$h_6$, and the vertex $h_1$(i.e. $c_{n-1}$) (see Fig.~\ref{fig:spriochain:b}). We categorize the subtrees of the random spiro chain $RSC_n$ into four cases:

(i) contain neither $(c_{n-1},h_2)$ nor $(c_{n-1},h_6)$,

(ii) contain $(c_{n-1},h_2)$ but not $(c_{n-1},h_6)$,

(iii) contain $(c_{n-1},h_6)$ but not $(c_{n-1},h_2)$,

(iv) contain both $(c_{n-1},h_2)$ and $(c_{n-1},h_6)$.

by Lemma \ref{lemma:subtreecondes}, the subtree number of case (i) is

\begin{equation}
\label{equ:subtreenumofrsc01}
\begin{split}
\emph{STN}(RSC_{n-1})+15.
\end{split}
\end{equation}

Taking the the random spiro chain $RSC_{n-1}$ as a single vertex ``$c_{n-1}$" with weight $\emph{STN}(RSC_{n-1}(c_{n-1}))$ (namely, subtree number of $RSC_{n-1}$ containing vertex $c_{n-1}$) (see Fig.~\ref{fig:spriochain:b}), then, by Lemma \ref{lemma:suboddevencontwovertexlem}, we know that both the subtree number of case (ii) and case (iii) are
\begin{equation}
\label{equ:subtreenumofrsc023}
\begin{split}
5\emph{STN}(RSC_{n-1}(c_{n-1})).
\end{split}
\end{equation}
and further with Theorem \ref{theorem:subtrenumthreecontainpath}, we can obtain that the subtree number of case (iii) is
\begin{equation}
\label{equ:subtreenumofrsc024}
\begin{split}
10\emph{STN}(RSC_{n-1}(c_{n-1})).
\end{split}
\end{equation}
thus, we have
\begin{equation}
\label{equ:subtreenumofspc025}
\begin{split}
\emph{STN}(RSC_n)=\emph{STN}(RSC_{n-1})+20\emph{STN}(RSC_{n-1}(c_{n-1}))+15.
\end{split}
\end{equation}

Namely,
\begin{equation}
\label{equ:subtreenumofrsc026}
\begin{split}
\emph{STN}(RSC_{n+1})=\emph{STN}(RSC_{n})+20\emph{STN}(RSC_{n}(c_{n}))+15.
\end{split}
\end{equation}
For a random spiro chain $RSC(n,p_1,p_2)$, the subtree number of $RSC(n,p_1,p_2)$ containing vertex $c_{n}$ is a random variable, and its expected value is denoted by
\begin{equation}
\label{equ:subtreenumofrsc027}
\begin{split}
C_{n}=E(\emph{STN}(RSC(n,p_1,p_2;c_{n}))).
\end{split}
\end{equation}
By the expectation operator and eqs.~\eqref{equ:subtreenumofrsc026} and \eqref{equ:subtreenumofrsc027}, we can obtain a recursive relation for the expected value of the subtree number index of a random spiro chain $RSC(n,p_1,p_2)$
\begin{equation}
\label{equ:subtreenumofrsc028}
\begin{split}
E(\emph{STN}(RSC(n+1,p_1,p_2)))=E(\emph{STN}(RSC(n,p_1,p_2)))+20C_{n}+15.
\end{split}
\end{equation}

Now, we consider $C_{n}$. Again, take the the random spiro chain $RSC_{n-1}$ as a single vertex ``$c_{n-1}$" with weight $\emph{STN}(RSC_{n-1}(c_{n-1}))$ (see Fig.~\ref{fig:spriochain:b}), then, with Theorem \ref{theorem:subtrenumthreecontainfixver},
\begin{enumerate}[(i)]
  \item If $RSC_{n}\rightarrow RSC_{n+1}^1$ with probability $p_1$,
   $$\emph{STN}(RSC(n,p_1,p_2;c_{n}))=5+16\emph{STN}(RSC(n-1,p_1,p_2;c_{n-1}))$$
   with probability $p_1$.
  \item If $RSC_{n}\rightarrow RSC_{n+1}^2$ with probability $p_2$,
   $$\emph{STN}(RSC(n,p_1,p_2;c_{n}))=8+13\emph{STN}(RSC(n-1,p_1,p_2;c_{n-1}))$$
   with probability $p_2$.
  \item If $RSC_{n}\rightarrow RSC_{n+1}^3$ with probability $1-p_1-p_2$,
   $$\emph{STN}(RSC(n,p_1,p_2;c_{n}))=9+12\emph{STN}(RSC(n-1,p_1,p_2;c_{n-1}))$$
   with probability $1-p_1-p_2$.
\end{enumerate}

From (i)-(iii) above, we immediately obtain
\begin{equation}
\label{equ:subtreenumofrsctn-1}
\begin{split}
C_{n}=&p_1[5+16\emph{STN}(RSC(n-1,p_1,p_2;c_{n-1}))]+p_2[8+13\emph{STN}(RSC(n-1,p_1,p_2;c_{n-1}))]\\
        &+(1-p_1-p_2)[9+12\emph{STN}(RSC(n-1,p_1,p_2;c_{n-1}))]\\
        =&(12+4p_1+p_2)\emph{STN}(RSC(n-1,p_1,p_2;c_{n-1})+9-(4p_1+p_2).
\end{split}
\end{equation}
By applying the expectation operator to the above eq.~\eqref{equ:subtreenumofrsctn-1}, we obtain
\begin{equation}
\label{equ:subtreenumofrsctn-1step1}
\begin{split}
C_{n}=(12+4p_1+p_2)C_{n-1}+9-(4p_1+p_2).
\end{split}
\end{equation}
Since $C_{1}=21$, using the above recurrence relation, we have
\begin{equation}
\label{equ:subtreenumofrsctn-1step2}
\begin{split}
C_{n}=\frac{20}{11+4p_1+p_2}(12+4p_1+p_2)^{n}+1-\frac{20}{11+4p_1+p_2}.
\end{split}
\end{equation}

It is easy to see that $E(\emph{STN}(RSC(1,p_1,p_2)))=36$, from eq.~\eqref{equ:subtreenumofrsc028}, we have
\begin{equation}
\label{equ:subtreenumofrsc3add}
\begin{split}
E(\emph{STN}(RSC(n+1,p_1,p_2)))=&20[\frac{20}{11+4p_1+p_2}(12+4p_1+p_2)^{n}+1-\frac{20}{11+4p_1+p_2}]\\
 &+E(\emph{STN}(RSC(n,p_1,p_2)))+15\\
 =&\frac{400}{11+4p_1+p_2}(12+4p_1+p_2)^n-\frac{400}{11+4p_1+p_2}\\
 &+E(\emph{STN}(RSC(n,p_1,p_2)))+35.
\end{split}
\end{equation}

Using the above recurrence relation, we have
\begin{equation}\label{equ:subtreenumofrscfav}
\begin{split}
E(\emph{STN}(RSC(n,p_1,p_2)))=&\frac{400(12+4p_1+p_2)}{(11+4p_1+p_2)^2}\big((12+4p_1+p_2)^{n-1}-1\big)\\
&+(35-\frac{400}{11+4p_1+p_2})(n-1)+36.
\end{split}
\end{equation}
The theorem holds immediately.
\end{proof}

Let $(p_1,p_2)=(1,0),(0,1)$, or $(0,0)$, we can obtain the subtree number index of the spiro ortho-chain $O_n$, the meta-chain $M_n$ and the para-chain $P_n$, respectively, with Theorem~\ref{theo:exvaluesubtreeofrsc}, we have
\begin{corollary}\label{cor:subtrenumthreespeciashcexac}
The subtree number indices of the spiro ortho-chain $O_n$, the meta-chain $M_n$ and the para-chain $P_n$ are
\begin{equation}
\label{equ:subtrenumthreespeciashcexac}
\begin{split}
&{STN}(O_n)={\frac{256(16^{n-1}-1)}{9}}+{\frac{25(n-1)}{3}}+36,\\
&{STN}(M_n)={\frac{325(13^{n-1}-1)}{9}}+{\frac{5(n-1)}{3}}+36,\\
&{STN}(P_n)={\frac{4800(12^{n-1}-1)}{121}}-{\frac{15(n-1)}{11}}+36.\\
\end{split}
\end{equation}
\end{corollary}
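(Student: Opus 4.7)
The plan is direct substitution. When $(p_1,p_2)\in\{(1,0),(0,1),(0,0)\}$, no random choice is ever made in the construction of $RSC(n,p_1,p_2)$, so the chain degenerates to the deterministic spiro ortho-, meta-, or para-chain respectively. Consequently, the random variable $\emph{STN}(RSC(n,p_1,p_2))$ is almost surely constant and equals its expectation, so $\emph{STN}(O_n)$, $\emph{STN}(M_n)$, and $\emph{STN}(P_n)$ can be read off from the closed-form expression in Theorem~\ref{theo:exvaluesubtreeofrsc} by plugging in the three parameter pairs.

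First I would tabulate the three substitutions into the quantities $11+4p_1+p_2$, $12+4p_1+p_2$, and $140p_1+35p_2-15$ appearing in Theorem~\ref{theo:exvaluesubtreeofrsc}: the pair $(1,0)$ yields $(15,16,125)$, the pair $(0,1)$ yields $(12,13,20)$, and the pair $(0,0)$ yields $(11,12,-15)$. Inserting these triples into the formula produces expressions of the shape $A\cdot r^n + Bn + C$, specifically $\tfrac{16}{9}\cdot 16^n+\tfrac{25}{3}n-\tfrac{7}{9}$, $\tfrac{25}{9}\cdot 13^n+\tfrac{5}{3}n-\tfrac{16}{9}$, and $\tfrac{400}{121}\cdot 12^n-\tfrac{15}{11}n-\tfrac{279}{121}$.

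Next I would rewrite each of these as $\tfrac{A\, r\,(r^{n-1}-1)}{\cdot}+B(n-1)+36$ to match the cosmetic form in the corollary statement. This amounts to verifying the single identity $C = 36 - \tfrac{A\,r}{\cdot}(\text{shift constant}) - B$ in each case, or equivalently checking that the resulting formula returns $36$ at $n=1$; the latter is immediate because a single hexagon $C_6$ is known to have $36$ nonempty subtrees, which also serves as the base case recorded in the proof of Theorem~\ref{theo:exvaluesubtreeofrsc}.

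There is essentially no obstacle: the entire argument is substitution and arithmetic regrouping. The only mild subtlety worth flagging in the write-up is the justification that $\emph{STN}=E(\emph{STN})$ for the three deterministic specializations, which I would state in a single sentence before performing the three substitutions.
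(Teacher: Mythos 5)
Your proposal is correct and follows exactly the paper's route: the paper likewise obtains the corollary by substituting $(p_1,p_2)=(1,0),(0,1),(0,0)$ into Theorem~\ref{theo:exvaluesubtreeofrsc} and regrouping, and your intermediate expressions $\tfrac{16}{9}16^n+\tfrac{25}{3}n-\tfrac{7}{9}$, $\tfrac{25}{9}13^n+\tfrac{5}{3}n-\tfrac{16}{9}$, $\tfrac{400}{121}12^n-\tfrac{15}{11}n-\tfrac{279}{121}$ all agree with the stated formulas. The one-sentence remark that the three specializations are deterministic (so $\emph{STN}=E(\emph{STN})$) is a worthwhile addition that the paper leaves implicit.
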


Again, the results of Corollary \ref{cor:subtrenumthreespeciashcexac} agree with the subtree numbers of $O_n$, $M_n$ and $P_n$ presented in \cite{yang2015subtrees}.

\subsection{A relation between $E(\emph{STN}(RPC))$ and $E(\emph{STN}(RSC))$}
\label{Sec:relationbesubandbcsubofbothchains}

It is easy to see that every spiro chain could be obtained by  squeezing off the cut edges of a polyphenylene chain.  Pavlovi{\'{c}} and
Gutman \cite{pavlovic1997wiener}, Deng \cite{deng2012wiener} provided a formula of the relation between the Wiener indices of a polyphenylene chain and its squeeze independently. In 2015, Yang et al. \cite{yang2015subtrees} presented a formula of the relation between the subtree number index of these two chains.

When random structures are concerned, Yang and Zhang \cite{Yang2012371} presented an  exact formula for
the expected value of the Wiener index of a random polyphenylene chain $RPC(n,p_1,p_2)$ with the same probabilities $p_1$ and $p_2$, Regarding the random polyphenylene chain $RPC(n,p_1,p_2)$ and spiro chain $RSC(n,p_1,p_2)$,  Huang, Kuang and Deng \cite{Huang2013The} presented a relation between the expected values of the Kirchhoff indices of these two chains.

In what follows, we present a relation
between the expected values of the subtree number indices of the random polyphenylene chain $RPC(n,p_1,p_2)$ and the random spiro chain $RSC(n,p_1,p_2)$  from Theorems \ref{theo:expsubtreeofrpc} and \ref{theo:exvaluesubtreeofrsc}.

\begin{theorem}\label{theo:relationbetwsEXhcandphc}
 For a random polyphenylene chain $RPC(n,p_1,p_2)$ and a random spiro chain
$RSC(n,p_1,p_2)$ with $n$ hexagons, the expected values of their subtree number indices are related
as
\begin{equation}\label{equ:subtreegenfunrelationsandp}
\begin{split}
400E(\emph{STN}(RPC(n,p_1,p_2)))=441E(\emph{STN}(RSC(n,p_1,p_2)))-1035n-441.
\end{split}
\end{equation}
\end{theorem}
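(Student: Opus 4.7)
The identity in question is linear in the two closed-form expressions obtained in Theorems \ref{theo:expsubtreeofrpc} and \ref{theo:exvaluesubtreeofrsc}, so the plan is simply to substitute those expressions into both sides of \eqref{equ:subtreegenfunrelationsandp} and verify that the equality holds as an identity in $n$ and in the parameters $p_1,p_2$. No new combinatorial argument is required; the work is purely algebraic.

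More concretely, I would split each side into three components according to the three types of terms appearing in the closed forms: an exponential term of shape $A(p_1,p_2)(12+4p_1+p_2)^n$, a linear term $B(p_1,p_2)n$, and a constant term $C(p_1,p_2)$. Because these three functions of $n$ are linearly independent (for fixed $p_1,p_2$ with $4p_1+p_2>0$, and trivially in the degenerate case as well), it suffices to check the identity separately on each component. The exponential coefficients on the two sides are
\[
400\cdot\frac{441}{(11+4p_1+p_2)^2}\quad\text{and}\quad 441\cdot\frac{400}{(11+4p_1+p_2)^2},
\]
so they agree immediately. The same cancellation handles the parameter-dependent constant coming from the $-\frac{441}{(11+4p_1+p_2)^2}$ and $-\frac{400}{(11+4p_1+p_2)^2}$ pieces.

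The only part that actually requires computation is the coefficient of $n$: I must verify
\[
\frac{400(144p_1+36p_2-45)}{11+4p_1+p_2}=\frac{441(140p_1+35p_2-15)}{11+4p_1+p_2}-1035.
\]
Clearing the common denominator reduces this to checking
\[
400(144p_1+36p_2-45)=441(140p_1+35p_2-15)-1035(11+4p_1+p_2),
\]
which is a polynomial identity in $p_1,p_2$ and can be confirmed by expanding both sides. Finally, the purely numerical constant terms (the $+1$ in Theorem \ref{theo:exvaluesubtreeofrsc} and the $-441$ on the right-hand side of \eqref{equ:subtreegenfunrelationsandp}) also match after multiplying by $441$.

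I do not expect any real obstacle; the sole risk is arithmetic slip in the coefficient of $n$, so I would carry out that particular expansion carefully, collecting the coefficients of $p_1$, $p_2$, and the constant separately. Once these three equalities are confirmed, the theorem follows directly from Theorems \ref{theo:expsubtreeofrpc} and \ref{theo:exvaluesubtreeofrsc}.
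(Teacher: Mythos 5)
Your proposal is correct and follows essentially the same route as the paper: both derive the identity purely algebraically from the closed forms in Theorems \ref{theo:expsubtreeofrpc} and \ref{theo:exvaluesubtreeofrsc}. The paper organizes the computation by isolating the common exponential part and observing that its two coefficients stand in the ratio $441/400$, whereas you verify the exponential, linear, and constant components separately; the arithmetic (in particular the check $400(144p_1+36p_2-45)=441(140p_1+35p_2-15)-1035(11+4p_1+p_2)$) works out exactly as you anticipate.
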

\begin{proof}

From Eq.~\eqref{equ:subtreenumofrpctn-1step3} and \eqref{equ:subtreenumofrscfav}, we have
\begin{equation}\label{equ:subtreegenfuncdiffence}
\begin{split}
\frac{E(\emph{STN}(RPC(n,p_1,p_2)))-(36-\frac{441}{11+4p_1+p_2})(n-1)-36}{E(\emph{STN}(RSC(n,p_1,p_2)))-(35-\frac{400}{11+4p_1+p_2})(n-1)-36}=\frac{441}{400},\\
\end{split}
\end{equation}
or equivalently,
\begin{equation}\label{equ:subtreediffofsandp}
\begin{split}
400E(\emph{STN}(RPC(n,p_1,p_2)))=441E(\emph{STN}(RSC(n,p_1,p_2)))-1035n-441.\\
\end{split}
\nonumber
\end{equation}
\qed
\end{proof}

 By  Theorem~\ref{theo:relationbetwsEXhcandphc},   the expected value of the subtree number index of the random spiro chain is less than the random polyphenylene chain. In fact,  for $n\rightarrow \infty$,
 $$E(\emph{STN}(RSC(n,p_1,p_2))) \approx \frac{400}{441}E(\emph{STN}(RPC(n,p_1,p_2))).$$

From Theorems \ref{theo:expsubtreeofrpc} and \ref{theo:exvaluesubtreeofrsc}, we also point out that
\[
E(\emph{STN}(RPC(n,p_1,p_2)))\approx \frac{441}{(11+4p_1+p_2)^2}(12+4p_1+p_2)^n
\]
and
\[
E(\emph{STN}(RSC(n,p_1,p_2)))\approx \frac{400}{(11+4p_1+p_2)^2}(12+4p_1+p_2)^{n}.
\]

Namely, the values of $E(\emph{STN}(RPC(n,p_1,p_2)))$ and $E(\emph{STN}(RSC(n,p_1,p_2)))$ are asymptotic to exponential function in $n$ as $n\longrightarrow  \infty$.

\section{Average value of the subtree number index}
\label{sec:averofspiroandpolyph}
Let $\overline{G}_n$ be the set of all polyphenylene chains with $n$ hexagons. The average value of the
subtree number indices with respect to $\overline{G}_n$ is
\[
\emph{STN}_{avr}(\overline{G}_n)=\frac{1}{|\overline{G}_n|}\sum\limits_{G\in \overline{G}_n}\emph{STN}(G).
\]

In order to obtain the average value of the subtree number indices with respect to $\overline{G}_n$, we
only need to take $p_1=p_2=\frac{1}{3}$ in the random polyphenylene chain $RPC(n,p_1,p_2)$, i.e., the
average value of the subtree number indices with respect to $\overline{G}_n$ is just the expected value of the
subtree number index of the random polyphenylene chain $RPC(n,p_1,p_2)$ for $p_1=p_2=\frac{1}{3}$. From
Theorem \ref{theo:expsubtreeofrpc}, we have
\begin{theorem}\label{theo:aversubtreeofrpc}
The average value of the subtree number indices with respect to $\overline{G}_n$ is
\begin{equation}
\label{equ:subtreenumofrpcaver}
\begin{split}
\emph{STN}_{avr}(\overline{G}_n)=\frac{3969}{1444}(\frac{41}{3})^n+\frac{45}{38}n-\frac{3969}{1444}.
\end{split}
\nonumber
\end{equation}

\end{theorem}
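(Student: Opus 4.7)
The plan is to derive this formula as an immediate corollary of Theorem \ref{theo:expsubtreeofrpc} by choosing the probability parameters so that the random model $RPC(n,p_1,p_2)$ is in fact the uniform distribution on $\overline{G}_n$.

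First I would verify the reduction from the average to the expectation. For each $i = 3, 4, \ldots, n$, the construction of $RPC(n,p_1,p_2)$ makes an independent choice among the three local arrangements $RPC_i^1$, $RPC_i^2$, $RPC_i^3$ with probabilities $p_1$, $p_2$, $1-p_1-p_2$. Setting $p_1 = p_2 = \tfrac{1}{3}$ therefore assigns probability $\bigl(\tfrac{1}{3}\bigr)^{n-2}$ to every element of $\overline{G}_n$, which is precisely the uniform distribution. Consequently
\[
\emph{STN}_{avr}(\overline{G}_n) \;=\; E\bigl(\emph{STN}(RPC(n,\tfrac{1}{3},\tfrac{1}{3}))\bigr),
\]
and the theorem reduces to evaluating the right-hand side of the formula in Theorem \ref{theo:expsubtreeofrpc} at $p_1 = p_2 = \tfrac{1}{3}$.

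Next I would carry out that substitution, computing the three building blocks that appear in the closed form: $11 + 4p_1 + p_2 = \tfrac{38}{3}$, $12 + 4p_1 + p_2 = \tfrac{41}{3}$, and $144p_1 + 36p_2 - 45 = 15$. From these, the exponential coefficient becomes $\tfrac{441}{(38/3)^2} = \tfrac{3969}{1444}$ and the linear coefficient becomes $\tfrac{15}{38/3} = \tfrac{45}{38}$. Substituting into the formula of Theorem \ref{theo:expsubtreeofrpc} yields exactly
\[
\emph{STN}_{avr}(\overline{G}_n) \;=\; \frac{3969}{1444}\Bigl(\frac{41}{3}\Bigr)^{\!n} + \frac{45}{38}\,n - \frac{3969}{1444}.
\]

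There is no genuine obstacle here: the only thing to be careful about is the justification that $p_1 = p_2 = \tfrac{1}{3}$ truly realises the uniform measure on $\overline{G}_n$ (which follows because the Markov process is zeroth-order and the three options at each step are given equal probability), after which the proof reduces to routine arithmetic simplification of the general expression supplied by Theorem \ref{theo:expsubtreeofrpc}.
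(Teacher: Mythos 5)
Your proposal is correct and follows exactly the paper's route: the paper likewise observes that $\emph{STN}_{avr}(\overline{G}_n)$ equals $E(\emph{STN}(RPC(n,\tfrac{1}{3},\tfrac{1}{3})))$ and then substitutes $p_1=p_2=\tfrac{1}{3}$ into Theorem \ref{theo:expsubtreeofrpc}, and your arithmetic ($11+4p_1+p_2=\tfrac{38}{3}$, $12+4p_1+p_2=\tfrac{41}{3}$, $144p_1+36p_2-45=15$) checks out. If anything, you are slightly more careful than the paper, which asserts the identification with the uniform distribution on $\overline{G}_n$ without the justification you supply.
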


Similarly, Let $G_n$ be the set of all spiro chains with $n$ hexagons. The average value of the
subtree number indices with respect to $G_n$ is
\[
\emph{STN}_{avr}(G_n)=\frac{1}{|G_n|}\sum\limits_{G\in G_n}\emph{STN}(G),
\]
and the average value of the subtree number indices with respect to $G_n$ is just the expected value of the
subtree number index of the random spiro chain $RSC(n,p_1,p_2)$ for $p_1=p_2=\frac{1}{3}$. From
Theorem \ref{theo:exvaluesubtreeofrsc}, we have
\begin{theorem}\label{theo:aversubtreeofrsc}
The average value of the subtree number indices with respect to $\overline{G}_n$ is
\begin{equation}
\label{equ:subtreenumofrpcaver}
\begin{split}
\emph{STN}_{avr}(G_n)=\frac{900}{361}(\frac{41}{3})^n+\frac{130}{38}n-\frac{539}{361}.
\end{split}
\nonumber
\end{equation}

\end{theorem}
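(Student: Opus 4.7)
The plan is to observe that this theorem is a direct specialization of Theorem~\ref{theo:exvaluesubtreeofrsc} obtained by setting $p_1=p_2=\tfrac{1}{3}$, exactly as the paragraph preceding the statement already suggests. The only conceptual step is to justify that averaging uniformly over $G_n$ is the same as taking the expectation in the random spiro chain model with uniform probabilities; the rest is arithmetic.

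First I would point out that for $n\ge 3$ each new terminal hexagon in a spiro chain is attached in exactly one of three local arrangements (types~1, 2, or 3 as in Fig.~\ref{fig:threekindsofranrsc}), and these choices are independent across the $n-2$ construction steps. Consequently $|G_n|=3^{n-2}$, and the uniform probability measure on $G_n$ coincides with the law of $RSC(n,\tfrac{1}{3},\tfrac{1}{3})$ because each of the three possible extensions occurs with probability $\tfrac{1}{3}$ at every step. Hence
\[
\emph{STN}_{avr}(G_n)=\frac{1}{|G_n|}\sum_{G\in G_n}\emph{STN}(G)=E\!\left(\emph{STN}\!\left(RSC\!\left(n,\tfrac{1}{3},\tfrac{1}{3}\right)\right)\right).
\]

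Next I would substitute $p_1=p_2=\tfrac{1}{3}$ into the formula of Theorem~\ref{theo:exvaluesubtreeofrsc}. The three quantities that appear are $11+4p_1+p_2=\tfrac{38}{3}$, $12+4p_1+p_2=\tfrac{41}{3}$, and $140p_1+35p_2-15=\tfrac{130}{3}$. Plugging in gives
\[
E\!\left(\emph{STN}\!\left(RSC\!\left(n,\tfrac{1}{3},\tfrac{1}{3}\right)\right)\right)=\frac{400}{(38/3)^2}\!\left(\tfrac{41}{3}\right)^{n}+\frac{130/3}{38/3}\,n-\frac{400}{(38/3)^2}+1,
\]
which simplifies to
\[
\frac{3600}{1444}\!\left(\tfrac{41}{3}\right)^{n}+\frac{130}{38}\,n-\frac{900}{361}+1=\frac{900}{361}\!\left(\tfrac{41}{3}\right)^{n}+\frac{130}{38}\,n-\frac{539}{361},
\]
using $-\tfrac{900}{361}+1=-\tfrac{539}{361}$. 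This yields exactly the stated expression.

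There is no real obstacle in this proof; the only point that needs a line of justification is the identification of the uniform average with the expectation under $p_1=p_2=\tfrac{1}{3}$, which rests on the zeroth-order Markov nature of the construction process (each attachment is independent of the previous ones). Once that is stated, everything reduces to substitution and simplification of rational constants.
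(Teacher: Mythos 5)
Your proof is correct and follows essentially the same route as the paper, which simply asserts that the uniform average over $G_n$ equals $E(\emph{STN}(RSC(n,\tfrac13,\tfrac13)))$ and then substitutes $p_1=p_2=\tfrac13$ into Theorem~\ref{theo:exvaluesubtreeofrsc}; your arithmetic ($11+4p_1+p_2=\tfrac{38}{3}$, $12+4p_1+p_2=\tfrac{41}{3}$, $140p_1+35p_2-15=\tfrac{130}{3}$, and $-\tfrac{900}{361}+1=-\tfrac{539}{361}$) checks out. Your added remark justifying the identification of the uniform average with the expectation via the independence of the $n-2$ attachment choices is a small bonus the paper leaves implicit.
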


\section{Concluding remarks}
\label{Sec:Conclusion}
In this paper we obtain exact formulas for the expected values of subtree number index of the random polyphenylene and spiro chains, and then establish a relation between the expected values of the subtree number indices of a random polyphenylene and its corresponding random hexagonal squeeze, we also briefly study the average values for subtree number indices with respect to the set of all polyphenylene and spiro chains with $n$ hexagons.

For future works, we plan to study the expected values of subtree number index of the random hexagonal chains, phenylene
chains and other regular chemical structures such as cata-condensed hexagonal systems. Meanwhile, It is also interesting to study the expected values of the recently proposed multi-distance granularity structural $\alpha$-subtree index \cite{yangyu2019aerfa} of the random polyphenylene and spiro chains, as well as other regular chemical structures.

\section*{Acknowledgment}

 The authors would like to thank two anonymous referees for their suggestions and comments which results in a great improvement of this original paper.
This work is partially supported by
the National Natural Science Foundation of China (Grant nos. 61702291, 61772102, 11971311, 11531001, 11801371); Program for Science \& Technology Innovation Talents in Universities of Henan Province(Grant no. 19HASTIT029); the Montenegrin-Chinese Science and Technology Cooperation Project (Grant No. 3-12); the Key Research Project in Universities  of Henan Province(Grant nos. 19B110011, 19B630015); and the Scientific Research Starting Foundation for High-level Talents of Pingdingshan University (Grant no.PXY-BSQD2017006).

\end{document}